\theoremstyle{plain}
\newtheorem{thm}{Theorem}
\newtheorem{lem}[thm]{Lemma}
\newtheorem{prop}[thm]{Proposition}
\theoremstyle{definition}
\newtheorem{example}[thm]{Example}
\newtheorem*{notat}{Notations and conventions}
\theoremstyle{remark}
\newcommand{\End}{\operatorname{End}}
\newcommand{\ann}{\operatorname{ann}}
\newcommand{\acts}{%
  \!\mathrel{\begin{tikzpicture}[scale=.8,baseline=(current  bounding  box.south)] 
  \useasboundingbox (-.6,-.2) rectangle (.1,.2);
  \node at (0,-.06) {} edge[out=210,in=150,loop] ();
\end{tikzpicture}}\!\!\!\!
}
\newcommand{\Th}{^\text{\rm th}}
\newcommand{\rad}{\operatorname{rad}}
\newcommand{\diag}{\operatorname{diag}}
\newcommand{\ch}{\operatorname{char}}
\newcommand{\into}{\hookrightarrow}
\newcommand{\Mat}{\operatorname{Mat}}
\DeclareMathOperator{\GKdim}{GKdim}
\renewcommand{\k}{\mathbb{k}}
\renewcommand{\phi}{\varphi}
\newcommand{\bdot}{\,\text{\raisebox{-.45ex}{$\boldsymbol{\cdot}$}}\,}
\newcommand{\sE}{\mathscr{E}}
\newcommand{\sC}{\mathscr{C}}
\newcommand{\sD}{\mathscr{D}}
\newcommand{\sG}{\mathscr{G}}
\newcommand{\sS}{\mathscr{S}}
\newcommand{\sT}{\mathscr{T}}
\newcommand{\sN}{\mathscr{N}}
\newcommand{\sM}{\mathscr{M}}
\newcommand{\sR}{\mathscr{R}}
\newcommand{\sZ}{\mathscr{Z}}
\newcommand{\sX}{\mathscr{X}}
\newcommand{\cS}{\mathcal{S}}
\newcommand{\Sy}{\mathcal{S}}
\renewcommand{\d}{\delta}
\newcommand{\cen}{\mathcal{Z}}
\begin{document}

\title[Affine representable algebras]%
{A note on affine representable algebras}

\author{Martin Lorenz}
\address{Department of Mathematics, Temple University, Philadelphia, PA 19122, USA}

\subjclass[2020]{16P90, 16S20, 16S50}

\keywords{growth equivalence of algebras, Gelfand-Kirillov dimension, 
affine algebras, representable algebras,
virtually commutative algebras, PI algebras, subcentralizing extensions}

\begin{abstract}
We consider affine representable algebras, that is, finitely generated algebras over a field $\k$
that can be embedded into some matrix algebra over a commutative $\k$-algebra $C$. 
We show that $C$ can in fact be chosen to be a polynomial algebra over $\k$.
We also give a refined version of a theorem of V.T. Markov stating that
the Gelfand-Kirillov dimension of any affine representable algebra is an integer.
\end{abstract}

\maketitle


\section{Introduction}


\subsection{Some classes of noncommutative algebras}
\label{SS:IntroClasses}

Let $R$ be an algebra over a field $\k$, associative with $1$ 
but generally noncommutative. The algebra $R$ is called \emph{affine} if
there is a finite set of algebra generators and \emph{represen\-table} if $R$ embeds into a matrix algebra
over some commutative $\k$-algebra.
The class of representable algebras, introduced by
Malcev \cite{aM43},%
\footnote{Malcev \cite{aM43} requires
an embedding into matrices over some $\k$-field. 
For affine $\k$-algebras, the two concepts are equivalent \cite{kB86}; 
see also Proposition~\ref{P:Coefficient}.}
consists of \emph{PI algebras}, that is, algebras satisfying a 
polynomial identity, and it includes all affine algebras that are
finite (i.e., finitely generated as module) over their centers \cite{kB86}. Thus, referring to algebras 
finite over their centers as \emph{virtually commutative},
we have the following hierarchy of algebras:
\begin{center}
PI $\supset$ representable $\supset$ affine virtually commutative.
\end{center}
Affine virtually commutative algebras are
noetherian by Hilbert's Basis Theorem and the Artin-Tate Lemma \cite[Exercise 1.1.8]{mL18} 
and they are (strongly) finitely presented \cite{mL88a}, \cite{mL88}. Affine
representable algebras, on the other hand, need not have either of these properties.
There are also many affine PI algebras that
are not representable; see \S\ref{SS:GK} below. 
Our focus will be on affine representable algebras, but PI algebras and virtually commutative algebras 
will also play a role. See \cite{lRlS15}
for background on general representable algebras.


\subsection{Coefficient algebras}
\label{SS:Coefficient}

Let $R$ be affine representable, say
$R \into \Mat_d(C)$ for some commutative $\k$-algebra $C$. We will call $C$ 
a \emph{coefficient algebra} for $R$. 
Of course, any commutative $\k$-algebra containing $C$ also serves as a
coefficient algebra. On the other hand, since $R$ is assumed affine, every coefficient
algebra certainly contains one that is affine. In fact, more specific choices can be made.
The following proposition may be known---the proof uses standard arguments---but I have 
not been able to locate a reference.

\begin{prop}
\label{P:Coefficient}
Every affine representable $\k$-algebra has a coefficient algebra that is a
polynomial algebra over $\k$.
\end{prop}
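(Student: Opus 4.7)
The plan is to reduce to a coefficient domain via the equivalence stated in the footnote, then to apply Noether normalization together with generic freeness, and finally to clear a residual localization by a second Noether normalization.

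First, using the footnote, I would assume that $R \into \Mat_d(K)$ for some $\k$-field $K$. Letting $C$ be the $\k$-subalgebra of $K$ generated by the (finitely many) entries of a finite generating set of $R$ yields an affine commutative $\k$-domain $C$ with $R \into \Mat_d(C)$. It therefore suffices to produce an embedding of this domain $C$ into a matrix algebra over a polynomial $\k$-algebra; the required embedding of $R$ then follows by taking $d \times d$ matrices.

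Next I would apply Noether normalization to extract a polynomial subalgebra $P_0 = \k[t_1, \dots, t_s] \subseteq C$ over which $C$ is a finitely generated module. By Grothendieck's generic freeness theorem there exists a nonzero $f \in P_0$ such that $C[f^{-1}]$ is free of rank $m$ over $P_0[f^{-1}]$. Since $C$ is a domain, the localization map $C \into C[f^{-1}]$ is injective, and the regular representation then gives
$$
C \;\into\; C[f^{-1}] \;\into\; \Mat_m(P_0[f^{-1}]).
$$

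The main obstacle is that $P_0[f^{-1}]$ is a localization of a polynomial algebra rather than a polynomial algebra itself. I would dispose of this by applying essentially the same argument once more to $P_0[f^{-1}]$, which is fortunately quite well-behaved: as a localization of $P_0$, it is a regular (hence Cohen-Macaulay) affine $\k$-algebra of dimension $s$. A second Noether normalization supplies a polynomial subring $P' = \k[u_1, \dots, u_s] \subseteq P_0[f^{-1}]$ over which $P_0[f^{-1}]$ is finite; the Cohen-Macaulay condition combined with the Auslander-Buchsbaum formula then forces $\operatorname{pd}_{P'}\bigl(P_0[f^{-1}]\bigr)=0$, and Quillen-Suslin upgrades this to freeness of some rank $n$ as a $P'$-module. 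Regular representation yields $P_0[f^{-1}] \into \Mat_n(P')$, and composing with the earlier embedding produces $R \into \Mat_{dmn}(P')$ with $P'$ a polynomial $\k$-algebra, as required.
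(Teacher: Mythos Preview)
Your argument is correct and, from the point where you have an affine commutative domain $C$ in hand, it is essentially identical to the paper's: Noether normalization, generic freeness to get freeness after inverting some $f$, then the Cohen--Macaulay property of $P_0[f^{-1}]$ together with Auslander--Buchsbaum and Quillen--Suslin to make it free over a genuine polynomial algebra. The paper packages this last step as a single citation, but the content is the same.

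The one genuine difference is the opening reduction. You invoke Be\u{\i}dar's theorem (via the footnote) to assume from the start that $R$ embeds in matrices over a field, so that your $C$ is automatically a domain and the localization map $C \into C[f^{-1}]$ is trivially injective. The paper instead works directly with an arbitrary affine coefficient algebra $C$: it runs a noetherian induction to split $C$ into finitely many quotients $C_j$ in which any two nonzero ideals meet, observes that in such a ring every non-nilpotent element is regular, and only then localizes. Your route is shorter but imports an external result that the paper's argument avoids; the paper's route is self-contained and in fact yields Be\u{\i}dar's equivalence as a corollary (which is why the footnote points back to Proposition~\ref{P:Coefficient}). Both are fine, but be aware that citing the footnote here is a near-miss with circularity: it is Be\u{\i}dar's independent reference, not the proposition itself, that you are entitled to use.
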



\subsection{Gelfand-Kirillov dimension}
\label{SS:GK}

Let $S$ be an arbitrary $\k$-algebra.
For any finite subset $\sS \subseteq S$ and any integer $n \ge 0$, 
let $\sS^{(n)}$ denote the $\k$-subspace of 
$S$ that is generated
by all products of the form $s_1s_2\cdots s_m$ with $s_i \in \sS$ and $m \le n$. So 
$\sS^{(n)} \subseteq \sS^{(n+1)}$ and $\sS^{(n)}\sS^{(n')}  = \sS^{(n+n')}$. 
The \emph{Gelfand-Kirillov dimension}  of $S$ is defined by
\[
\GKdim S = \sup_\sS \varlimsup_n \log_n \dim_\k\sS^{(n)}, 
\]
where $\log_n \bdot = \log \bdot /\log n$ is the base-$n$ logarithm \cite{iGaK66}.
If $T$ is another $\k$-algebra, then we write
\begin{equation}
\label{E:Relation}
S \preceq T \quad \underset{\text{def}}{\iff}\quad
 \begin{minipage}{3.6in}
For every finite subset $\sS \subseteq S$, there is 
a finite subset $\sT \subseteq T$ and a constant $K$ 
such that $\dim_\k \sS^{(n)} \leq K \dim_\k \sT^{(n)}$
for all $n$.
\end{minipage}
\end{equation}
The relation $\preceq$ is evidently transitive; $S \preceq T$ holds whenever $S$ is a subalgebra 
or a homomorphic image of $T$; and
\begin{equation}
\label{E:GKequiv}
S \preceq T \implies \GKdim S \le \GKdim T.
\end{equation}
If $S \preceq T$ and $T \preceq S$, then we write $S \equiv T$ and call the algebras
$S$ and $T$ \emph{growth equivalent}. In this case, $\GKdim S = \GKdim T$ by \eqref{E:GKequiv};
in general, however, growth equivalence is finer than equality of 
Gelfand-Kirillov dimension \cite[\S1-2]{wBhK76}.
\smallskip

Our second contribution refines an earlier
result due to V.T. Markov \cite{vMxx}. 

\begin{thm}
\label{T:Markov}
Let $R$ be an affine representable $\k$-algebra. 
Then there is an affine commutative $\k$-algebra $D$
such that $R  \equiv D$. In particular, $\GKdim R$ is an integer.
\end{thm}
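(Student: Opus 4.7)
My plan is to use the structure theory of affine PI algebras, reducing to the prime case where Schelter's theorem provides an affine commutative center controlling the growth.

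By Proposition~\ref{P:Coefficient}, I may embed $R\hookrightarrow\Mat_d(C)$ with $C$ affine commutative; in particular $R$ satisfies the standard identity $s_{2d}$ and is a PI algebra. By Braun's theorem, the prime radical $N\subseteq R$ is nilpotent, and $R$ has only finitely many minimal primes $P_1,\dots,P_s$ with $\bigcap_iP_i=N$. Each quotient $R/P_i$ is an affine prime PI $\k$-algebra, so by Schelter's theorem its center $Z_i:=Z(R/P_i)$ is affine commutative and $R/P_i$ is a finite $Z_i$-module; consequently $R/P_i\equiv Z_i$. I take
\[
  D \;:=\; Z_1\oplus\cdots\oplus Z_s,
\]
which is affine commutative, and aim to show $R\equiv D$.

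For $D\preceq R$, I lift a finite generating set of each $Z_i$ back to $R$; the subalgebra these lifts generate surjects onto $Z_i$ modulo $P_i$, giving $Z_i\preceq R$. Using the orthogonal idempotents of $D=\bigoplus_iZ_i$, any finite $\sS\subseteq D$ splits componentwise, so $\dim_{\k}\sS^{(n)}\le\sum_i\dim_{\k}\sS_i^{(n)}$ for the projections $\sS_i\subseteq Z_i$, and assembling yields $D\preceq R$. For $R\preceq D$, the subdirect embedding $R/N\hookrightarrow\prod_iR/P_i$ together with the finite $D$-module structure of $\prod_iR/P_i$ gives $R/N\preceq D$; it then remains to upgrade this to $R\preceq D$ by showing $R\equiv R/N$.

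The main obstacle is this last step: bounding the growth contribution of the nilpotent radical $N$. The PI hypothesis alone does not suffice; one needs the representability hypothesis, which forces $RC\subseteq\Mat_d(C)$ to be a finite (hence Noetherian) $C$-module. This Noetherianness should control $N$: a careful descent argument from the finitely generated ideal $NRC$ of $RC$ ought to produce a finite generating set of $N$ as a two-sided ideal of $R$ itself, after which $N^k=0$ combined with this finite $R$-ideal-generation of $N$ yields $\dim_{\k}(N\cap R^{(n)})\le K\dim_{\k}(R/N)^{(n)}$ by layering the at most $k-1$ allowed insertions of $N$-generators into $R/N$-words of bounded total length. With this bound, $R\equiv R/N\equiv D$, and the conclusion $\GKdim R=\GKdim D\in\ZZ$ is immediate from $D$ being affine commutative.
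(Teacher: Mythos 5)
Your overall strategy---replace $R$ by $R/N$ and then use the minimal primes---breaks down at the step you yourself flag as the main obstacle, and in fact that step is not merely hard but false. Consider
\[
R=\begin{pmatrix} \k[x] & \k[x,y] \\ 0 & \k[y]\end{pmatrix}\subseteq \Mat_2(\k[x,y]),
\]
an affine representable algebra generated by $e_{11},e_{22},xe_{11},ye_{22},e_{12}$, with prime radical $N=\k[x,y]e_{12}$ and $N^2=0$. Here $N$ \emph{is} generated as an ideal by the single element $e_{12}$, and yet your claimed bound $\dim_\k(N\cap \sR^{(n)})\le K\dim_\k(R/N)^{(n)}$ fails: the products $(xe_{11})^ie_{12}(ye_{22})^j=x^iy^je_{12}$ show that $\dim_\k(N\cap\sR^{(n)})$ grows like $n^2$, while $R/N\cong\k[x]\times\k[y]$ has linear growth. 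So $\GKdim R=2$ while your $D=Z(R/P_1)\oplus Z(R/P_2)=\k[x]\oplus\k[y]$ has $\GKdim D=1$, and $R\not\equiv R/N$. The point the proposal misses is that monomials passing through the radical couple \emph{different} simple components of $R/N$, so the growth of $R$ is governed by \emph{products} of the component centers, not by their direct sum. The "layering" estimate you describe only yields $\GKdim R\le k\cdot\GKdim R/N$ (with $k$ the nilpotency degree), which is the classical bound and does not give integrality. The paper's proof is organized precisely around this phenomenon: after normalizing $R$ to the form $Z\oplus N$ (Steps 1--3), the Conclusion sorts monomials according to the finitely many "radical patterns" $\mu^*$ in the alphabet $\sN\cup\sE$ and takes $D=\k[\sZ_{\mu^*}]$, generated by the union of the centers of all components that the pattern visits; in the example above this correctly produces $D=\k[x,y]$.

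A second, repairable, gap is the appeal to "Schelter's theorem": it is not true that an affine prime PI $\k$-algebra is a finite module over its center, nor that its center is affine (by the Artin--Tate Lemma the first failure follows from the second, and prime affine PI algebras with non-affine centers exist). The correct statement, which is what Proposition~\ref{P:Closure} records, concerns the characteristic closure (trace ring) $TR$: it is finite over the affine commutative algebra $T$, and $R$ and $TR$ are linked by a common nonzero central multiplier $t\,TR\subseteq R$, from which $R\equiv TR\equiv T$ follows via Lemma~\ref{L:Sub}(a) and Lemma~\ref{L:Relation1}(b). So even the prime case of your argument needs the trace-ring detour rather than the center itself.
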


The second assertion of the theorem follows from the first,
since the Gelfand-Kirillov dimension of any affine
commutative algebra is an integer \cite[3.2]{wBhK76}.
In contrast, the Gelfand-Kirillov dimension of an affine PI algebra can be $0, 1$ or
any real value $\ge 2$ \cite[2.11]{wBhK76}. 
Integrality of $\GKdim R$ was stated with a brief indication of a proof in the
unpublished typescript \cite{vMxx}, which 
was made available to me by Lance Small around 1991.  I worked out the details
of Markov's theorem and my notes were eventually published as
\cite[Section 12.10]{gKtL00}. The proof of Theorem~\ref{T:Markov} given below follows 
the same general outline.


\begin{notat}
Algebras are assumed associative and unital. Centers will be denoted by $\cen \bdot$\,.
The notation and terminology used above will be retained throughout. 
In particular, $R$ will always denote an affine algebra over an arbitrary base field $\k$.
\end{notat}


\section{Some preparations} 
\label{S:Prep}


\subsection{Growth equivalence}
\label{SS:Relation}

It suffices to satisfy the condition in \eqref{E:Relation} for all $n \gg 0$,
since $K$ may be adjusted to include finitely many initial values of $n$. 
It is also enough to show $\dim_\k \sS^{(n)} \leq K \dim_\k \sT^{(cn)}$
for some constant $c$, because this implies $\dim_\k \sS^{(n)} \leq K \dim_\k \sT'^{(n)}$
for any finite generating set $\sT'$ of $\sT^{(c)}$. In particular,
if $S$ is affine, then it suffices to ensure that a fixed 
finite set of algebra generators $\sG \subseteq S$ 
satisfies the condition in \eqref{E:Relation}: An arbitrary finite
subset $\sS \subseteq S$ is contained in $\sG^{(c)}$ for some $c$; so
$\dim_\k \sG^{(n)} \le K \dim_\k \sT^{(n)}$ for all $n$ implies
$\dim_\k \sS^{(n)} \le \dim_\k \sG^{(cn)} \le K \dim_\k \sT^{(cn)}$.
\smallskip

As for growth equivalence, note that $S \equiv \k$ if and only if every affine
subalgebra of $S$ is finite dimensional, which in turn is equivalent to
$\GKdim S = 0$. Furthermore, by Noether normalization and part (b)
of the lemma below, any affine commutative algebra $S$ is growth equivalent to
a polynomial algebra in $\GKdim S$ many variables over $\k$. Thus, the converse of 
\eqref{E:GKequiv} holds for affine commutative algebras $S$ and $T$.

\begin{lem}
\label{L:Relation1}
Let $S$ and $T$ be $\k$-algebras.
\begin{enumerate}
\item
If there is a bimodule ${}_SM_T$ with ${}_SM$ faithful and $M_T$ finitely
generated, then $S \preceq T$.
\item
If $T \subseteq S$ and
$S$ is finitely generated as left or right $T$-module, then $S\equiv T$.
\end{enumerate}
\end{lem}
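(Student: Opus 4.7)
The plan is to first prove (a) by a direct structure-coefficient argument, and then to deduce (b) by applying (a) to the bimodule $M = S$ (possibly after passing to opposite algebras).

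For (a), I would fix a finite generating set $m_1, \dots, m_r$ of the right $T$-module $M$ and consider the $\k$-linear map $\phi\colon S \to M^r$ defined by $\phi(s) = (sm_1, \dots, sm_r)$. Because $M = \sum_i m_i T$, the kernel of $\phi$ equals $\{s \in S : sM = 0\}$, which is zero by faithfulness of ${}_SM$; so $\phi$ is injective. Given a finite subset $\sS \subseteq S$, I would then pick, for each $s \in \sS$ and each $i$, elements $t_{ji}^{(s)} \in T$ with $sm_i = \sum_j m_j\, t_{ji}^{(s)}$, and let $\sT \subseteq T$ be the finite set of all these $t_{ji}^{(s)}$. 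A straightforward induction on $n$ (whose inductive step uses the structure relations to rewrite $s \sum_j m_j \tau_j = \sum_k m_k \sum_j t_{kj}^{(s)} \tau_j$) would then give
\[
\sS^{(n)} m_i \subseteq \sum_{j=1}^r m_j \sT^{(n)} \qquad (1 \le i \le r,\ n \ge 0);
\]
combined with the injectivity of $\phi$, this yields
\[
\dim_\k \sS^{(n)} \;\le\; r \dim_\k \sum_{j=1}^r m_j \sT^{(n)} \;\le\; r^2 \dim_\k \sT^{(n)},
\]
which is the bound $S \preceq T$ with constant $K = r^2$. The one place requiring care is the bookkeeping in this induction: one must verify that $\sT$, chosen once and for all from $\sS$, is rich enough that the structure constants appearing at every level already lie in $\sT^{(1)}$, and this is automatic from the construction.

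For (b), the direction $T \preceq S$ is immediate from the subalgebra observation made right after the definition of $\preceq$. For the reverse $S \preceq T$, take $M = S$ with the natural left $S$- and right $T$-actions: then ${}_SM$ is faithful (since $1 \in M$), and the hypothesis that $S_T$ is finitely generated puts us directly in the setting of (a). If instead ${}_TS$ is finitely generated, I would pass to opposite algebras: because $\dim_\k \sS^{(n)}$ is insensitive to reversing products, the relation $\preceq$ is preserved under $(-)^{\mathrm{op}}$, and applying the right-module case to $S^{\mathrm{op}}$ and $T^{\mathrm{op}}$ still delivers $S \preceq T$. I do not anticipate any serious obstacle beyond the inductive bookkeeping noted above.
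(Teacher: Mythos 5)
Your proposal is correct and follows essentially the same route as the paper: the same embedding $s \mapsto (sm_1,\dots,sm_r)$, the same structure-constant set $\sT$ with the bound $\dim_\k \sS^{(n)} \le r^2 \dim_\k \sT^{(n)}$, and the same reduction of (b) to (a) via $M=S$. Your explicit treatment of the left-module case in (b) by passing to opposite algebras is a small extra care the paper leaves implicit, and your observation that $\preceq$ is preserved under $(-)^{\mathrm{op}}$ is valid.
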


\begin{proof}
(a)
Fix generators $(m_j)_1^r$ of $M_T$ and consider the map
$\mu \colon S \to M^r$, $s \mapsto (sm_j)$; this is a $\k$-linear embedding
as ${}_SM$ is faithful. Let $\sS \subseteq S$ be a finite subset and, for $s \in \sS$, write
$sm_j = \sum_i m_i t_{ij}(s)$ with $t_{ij}(s) \in T$. With  
$\sT = \{ t_{ij}(s) \mid s \in \sS \text{ and all } i,j \}$ and
$\sM = \sum_j \k m_j$, we obtain
$\mu(\sS^{(n)}) \subseteq (\sM\sT^{(n)})^r$ for all $n$.
Therefore, $\dim_\k \sS^{(n)} \le r^2 \dim_\k \sT^{(n)}$, verifying the condition in \eqref{E:Relation}.

\smallskip

(b) 
Since $T \subseteq S$, we certainly have $T \preceq S$. 
For $S \preceq T$, take $M=S$ in (a).
\end{proof}


\subsection{Subcentralizing extensions}
\label{SS:Sub}

An extension $S \subseteq U$ of $\k$-algebras is called
\emph{centralizing} if $U = C_U(S)S$, where 
$C_U(S) = \{ u \in U \mid su=us \text{ for all } s \in S \}$ is the
centralizer of $S$ in $U$. 
Any subextension $S \subseteq T$ with $T \subseteq U$
will then be called \emph{subcentralizing}.%
\footnote{Finite centralizing extensions and their subextensions have also been called \emph{liberal} 
and \emph{intermediate} extensions, respectively \cite{RS81}.}
Given subsets $\sX_i \subseteq U$, we let $S[\sX_i \mid \text{all } i]$ denote the
$\k$-subalgebra of $U$ that is generated by $S$ and $\bigcup_i \sX_i$\,.

\begin{lem}
\label{L:Sub}
Let $S \subseteq T \subseteq U$ be $\k$-algebras with
$S \subseteq U$ centralizing and assume that $T = S[\sX]$.
Each of the following implies $S\equiv T$:
\begin{enumerate}
\item
$c\sX \subseteq S$ and $(c-d)\sX = 0$ for some 
$d \in S$, $c \in C_U(S)$ with $c$ regular in $U$;
\item
$(\sX S)^m = 0$ for some positive integer $m$;
\item
$\dim_\k \k[\sX] < \infty$ and $S = \k[\sN, \sS]$ 
with $(\sN T)^m = 0$ and $sx=xs$ for all $s \in \sS$, $x \in \sX$.
\end{enumerate}
\end{lem}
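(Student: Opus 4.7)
The plan is to establish $T \preceq S$ in each case; the reverse $S \preceq T$ is automatic since $S \subseteq T$. In each case I will appeal to Lemma \ref{L:Relation1}: either part (a), by exhibiting a $(T,S)$-bimodule $M$ with ${}_T M$ faithful and $M_S$ finitely generated, or part (b), by directly producing a finite $S$-module generating set for $T$.

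For (a), the key identity is that $c \in C_U(S)$ commutes with every factor from $S$ and satisfies $cx \in S$ for each $x \in \sX$. Thus, for a monomial $s_0 x_1 s_1 \cdots x_k s_k \in T$ with $s_i \in S$ and $x_i \in \sX$, one computes by induction on $k$ that
\[
c^k (s_0 x_1 s_1 x_2 s_2 \cdots x_k s_k) \;=\; s_0 (cx_1) s_1 (cx_2) s_2 \cdots (cx_k) s_k \;\in\; S,
\]
by successively sliding each $c$ past the $s_i$'s (using $c \in C_U(S)$) and absorbing it into the adjacent $x_i$. Since $c$ is regular in $U$, left multiplication by $c^k$ is injective. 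I would package this into an application of Lemma \ref{L:Relation1}(a) with $M$ taken to be a $(T,S)$-subbimodule of $U$ built from $T$ together with finitely many powers of $c$, arranged so that $M_S$ is finitely generated. (Here the hypothesis $d\in S$ with $(c-d)\sX=0$ enters to express $cx_i$ as an element of the affine subring of $S$ generated by $d$ and a few other fixed elements, keeping the ``size'' of the resulting element of $S$ linear in the length of the monomial.)

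For (b), the relation $(\sX S)^m = 0$ forces every monomial $s_0 x_1 s_1 \cdots x_k s_k$ with $k \geq m$ to vanish, so $T$ is spanned by monomials with fewer than $m$ factors from $\sX$. Combined with the centralizing condition $U = C_U(S) \cdot S$, which lets me write each $x \in \sX$ as a finite sum $\sum c_{x,j} s_{x,j}$ with $c_{x,j} \in C_U(S)$, I can push the $c_{x,j}$'s past the $S$-factors in each monomial to obtain a presentation $(\text{bounded product of } C_U(S)\text{-elements})\cdot S$. With at most $m-1$ occurrences of $\sX$, only finitely many such bounded products arise, and these form a finite right $S$-generating set for $T$; Lemma \ref{L:Relation1}(b) then yields the claim. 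For (c), the commutation $sx = xs$ (for $s \in \sS$, $x \in \sX$) combined with the nilpotence $(\sN T)^m = 0$ brings every monomial of $T$ into a normal form
\[
\sigma_0 \chi_0\, n_1\, \sigma_1 \chi_1\, n_2 \cdots n_j\, \sigma_j \chi_j,
\]
with $j < m$, each $\sigma_i \in \k[\sS]$, each $\chi_i \in V := \k[\sX]$, and each $n_i \in \sN$. Since $\dim_\k V < \infty$ and $j$ is bounded, only finitely many ``shapes'' occur after replacing each $\chi_i$ by a basis element of $V$, and collecting these produces a finite list of right $S$-module generators for $T$; Lemma \ref{L:Relation1}(b) then applies. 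The filtration $T \supseteq T\sN T \supseteq (T\sN T)^2 \supseteq \cdots \supseteq (T\sN T)^m = 0$ can be used to handle nilpotent pieces inductively if needed.

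The main obstacle, especially in (a) and (c), is to ensure the bound one obtains has the form $\dim_\k \sT^{(n)} \leq K\,\dim_\k \sS^{(n)}$ with $K$ a genuine constant, as in the definition of $\preceq$. A naive count after the normal-form reduction produces factors polynomial in $n$ or in $\dim_\k \sS^{(n)}$; the role of the Lemma \ref{L:Relation1}(a) framework is precisely to package these contributions into a bounded ``rank'' $r$ for $M_S$, so that the constant $K = r^2$ from the proof of Lemma \ref{L:Relation1}(a) really is constant.
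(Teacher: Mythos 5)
Your treatments of (b) and (c) follow the paper's route in substance: bound the number of $\sX$-factors by the nilpotence hypothesis, write each $x\in\sX$ as $\sum_i c_{xi}y_i$ with $c_{xi}\in C_U(S)$ and $y_i\in S$ using $U=C_U(S)S$, and push the centralizing factors to the left so that only finitely many products $\gamma$ of the $c_{xi}$ occur. One caveat on (b): you conclude that these $\gamma$ ``form a finite right $S$-generating set for $T$'' and invoke Lemma~\ref{L:Relation1}(b). That step is not justified: the $\gamma$ need not lie in $T$, and the computation only shows $\sT^{(n)}\subseteq\sC\sS^{(n)}$, i.e.\ $T$ is \emph{contained in} the finitely generated right $S$-module $\sC S$; without a noetherian hypothesis this does not make $T$ itself finite over $S$. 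The paper sidesteps this by reading off the dimension bound $\dim_\k\sT^{(n)}\le|\sC|\dim_\k\sS^{(n)}$ directly from the containment, which is all that \eqref{E:Relation} requires.

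The genuine gap is in (a). Your plan is to produce a $(T,S)$-subbimodule $M$ of $U$ with ${}_TM$ faithful and $M_S$ finitely generated and then apply Lemma~\ref{L:Relation1}(a). No such $M$ can exist in general: take $S=\k[c]\subseteq U=\k[c,c^{-1}]$, $\sX=\{c^{-1}\}$ and $d=c$; all hypotheses of (a) hold and $T=U$, but every faithful $T$-submodule of $U$ is a nonzero ideal of the principal ideal domain $\k[c,c^{-1}]$ and hence is isomorphic to $\k[c,c^{-1}]$ as a $\k[c]$-module, so it is never finitely generated over $S$. Case (a) is precisely the case of central localization, where $T$ is far from finite over $S$, so the bimodule framework cannot close the argument. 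The mechanism you are missing is: multiply the span $\sT_n$ of all monomials with $f\ge1$ occurrences of $\sX$ by the \emph{fixed} power $c^n$; regularity of $c$ makes this injective, so dimensions are preserved; $f$ of the $n$ copies of $c$ are absorbed into the $x_i$ as $cx_i\in S$, and the leftover $c^{n-f}$---which is \emph{not} in $S$---is converted into $d^{n-f}\in\sS^{(n-f)}$ using $(c-d)\sX=0$ together with $cd=dc$. This places $c^n\sT_n$ inside $\sS^{(2n)}$ for the finite set $\sS=\sS'\cup c\sX'\cup\{d\}$, and the factor $2$ in the exponent is harmless by the remark in \S\ref{SS:Relation}. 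In your write-up the hypothesis $(c-d)\sX=0$ is assigned the wrong job (you use it to put $cx_i$ into $S$, which is already the hypothesis $c\sX\subseteq S$); its actual job is to dispose of the unabsorbed powers of $c$, and without that step the uniform bound required by \eqref{E:Relation} is not obtained.
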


\begin{proof}
Since $S \subseteq T$, only $T \preceq S$ needs to be proved in each case. For a given
finite $\sT \subseteq T$, choose finite subsets $\sS' \subseteq S$,
$\sX' \subseteq \sX$ such that $\sT \subseteq \k[\sS', \sX']$.
As we have remarked in \S\ref{SS:Relation},
we may assume that $\sT = \sS' \cup \sX'$. Then $\sT^{(n)}$ is
generated by products of the form
\begin{equation}
\label{E:tau}
\tau = s_0 x_{1} s_1 x_{2} \cdots s_{f-1} x_{f} s_f
\end{equation}
with $x_i \in \sX'$, $s_i$ a product with $\ell_i$ factors from $\sS'$, 
and $0 \le \sum_i \ell_i \le n-f \le n$.  
\smallskip

(a)
Note that $\sT^{(n)} = \sS'^{(n)} \cup \sT_n$\,, where $\sT_n$
denotes the subspace that is generated by all products $\tau$ as in \eqref{E:tau} with $f \ge 1$.
It suffices to show that $\dim_\k \sT_n \le \dim_\k \sS^{(2n)}$ for some finite subset $\sS \subseteq S$.
So let $\tau \in \sT_n$ and put $\sS = \sS' \cup c\sX' \cup \{ d \}$. Then
\[
\begin{aligned}
c^n\tau &=  s_0 c^{n-f}(cx_1) s_1 (cx_2)\cdots s_{f-1}(cx_f)s_f \\
&= s_0 d^{n-f}(cx_1) s_1 (cx_2)\cdots s_{f-1}(cx_f)s_f 
\in \sS^{(2n-f)}.
\end{aligned}
\]
Therefore, $c^n\sT_n \subseteq \sS^{(2n-f)} \subseteq \sS^{(2n)}$.
Since $c$ is regular, it follows that 
$\dim_\k \sT_n \le \dim_\k \sS^{(2n)}$, as desired.
\smallskip

(b) 
Our hypothesis allows us to assume $f < m$ in \eqref{E:tau}. 
For each $x \in \sX'$, write $x = \sum_{i=1}^l c_{xi}y_i$ with $c_{xi} \in C_U(S)$
and $y_i \in S$.
Put $\sS = \sS' \cup \{ y_1, \dots, y_l \}$. Then we can express $\tau$ as a
sum of terms of the form $\gamma\sigma$ with 
$\gamma = c_{x_1i_1}\cdots c_{x_fi_f}$ and 
$\sigma = s_0 y_{i_1} s_1 y_{i_2} \cdots s_{f-1} y_{i_f} s_f \in \sS^{(n)}$.
Thus, $\sT^{(n)} \subseteq \sC \sS^{(n)}$ with $\sC = \{ \text{all } \gamma \}$, a finite subset of $U$,
and so $\dim_\k \sT_n \le |\sC| \dim_\k \sS^{(n)}$. 
\smallskip

(c)
By Lemma~\ref{L:Relation1} it suffices to show that $T$ is finite over $S$. To this end, 
fix a $\k$-basis $(x_j)$ of $\k[\sX]$ and put $\sT = \sS \cup \{ x_j\mid \text{ all } j \}$. All elements
of $T$ are linear combinations of products 
\[
\tau = t_0 n_{1} t_1 n_{2} \cdots t_{f-1} n_{f} t_f
\]
with $n_i \in \sN$, $t_i \in \k[\sT]$ and $f < m$. Each $t_i$ is a linear combination
of products $sx_j$ with $s \in \k[\cS]$. Thus,
we may assume that $\tau$ has the form
\[
\tau = (s_0x_{j_0}) n_{1} (s_1x_{j_1}) n_{2} \cdots (s_{f-1}x_{j_{f-1}}) n_{f} (s_fx_{j_f}).
\]
Finally, we may 
write $x_j = \sum_{i=1}^l c_{ji}y_i$ with $c_{ji} \in C_U(S)$ and $y_i \in S$ as in (b) 
and express $\tau$ as a sum of terms $\gamma \sigma$ with
$\gamma = c_{j_0i_0}\cdots c_{j_fi_f}$ and 
$\sigma = (s_0 y_{i_0})n_1(s_1 y_{i_1})n_2 \cdots n_f(s_{f} y_{i_f}) \in S$.
The (finitely many) products $\gamma$ therefore generate $T$ as $S$-module.
\end{proof}

Lemma~\ref{L:Sub}(a) implies in particular that 
central localizations $S[\sC^{-1}]$, where
$\sC \subseteq \cen S$ is a multiplicative subset consisting of regular elements of $S$,
are growth equivalent to $S$.


\subsection{Characteristic closures}
\label{SS:Closure}

Let $A$ be a $\k$-algebra
that is free of rank $n < \infty$ over a subalgebra $C \subseteq \cen A$. 
For any $a \in A$, let $p_a(t) \in C[t]$ denote the characteristic polynomial of the matrix
$\rho a$, where $\rho$ is the regular representation,
\[
\rho \colon A \into \End_C(A) \cong \Mat_n(C), \quad (\rho a)b = ab \qquad (a,b \in A).
\]
Now let $R$ be an affine $\k$-algebra that is equipped with
an embedding $R \into A$ and let 
\[
T =T_{A/C}(R) \subseteq C
\]
denote
the $\k$-algebra that is generated by the coefficients of all $p_r(t)$ with $r \in R$.
The $\k$-subalgebra of $A$ that is generated by $R$ and $T$ is called
the \emph{characteristic closure}%
\footnote{The characteristic closure is also called the \emph{trace algebra},
especially if $\ch \k = 0$. 
In the special case of a matrix algebra $A = \Mat_d(C)$, we do
of course also have the ordinary 
characteristic polynomial $c_a(t) \in C[t]$ for any $a \in A$.  
The polynomial $p_a(t)$ as defined above
is identical to $c_a(t)^d$  \cite[Exemple 3 on p. ~A III.111]{nB70}. Thus, if
$R \into A$, then $T$
is contained in the algebra that is generated by the coefficients of all
$c_r(t)$ with $r \in R$.} 
 of $R$ over $C$; it is equal to the product $TR \subseteq A$.
Thus,
\[
T \subseteq C \subseteq \cen A\qquad \text{and}\qquad R \subseteq TR \subseteq A.
\]
The following proposition collects some well-known facts about $TR$.

\begin{prop}[notation as above]
\label{P:Closure}
\begin{enumerate}
\item
$TR$ is a finite module over $T$;
\item
$T$ is an affine (commutative) $\k$-algebra;
\item
Assume that $A$ is prime, $A=CR$, and $C = \cen A$. 
Then $t\,TR \subseteq R$ for some $0 \neq t \in T$.
\end{enumerate}
\end{prop}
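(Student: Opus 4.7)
The plan is to address parts (a), (b), and (c) in turn. Part (a) reduces to Cayley--Hamilton plus Shirshov's height theorem; part (b) is essentially a result of Procesi--Razmyslov; and part (c) combines a dual-basis calculation with a central-polynomial argument.

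For part (a): Cayley--Hamilton applied to $\rho r \in \Mat_n(C)$ shows that every $r \in R$ satisfies $p_r(r) = 0$ and is thus integral over $T$ of degree at most $n$. The embedding $A \hookrightarrow \Mat_n(C)$ makes $A$, and consequently $TR$, a PI algebra of bounded degree. Since the $T$-algebra $TR$ is generated by the finite set $\{r_1, \ldots, r_s\}$ of elements integral over $T$, Shirshov's height theorem yields that $TR$ is a finitely generated $T$-module.

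Part (b) is a version of the Procesi--Razmyslov theorem on finite generation of trace/characteristic algebras: the $\k$-subalgebra of $C$ generated by the characteristic polynomial coefficients of elements of an affine subalgebra of $\Mat_n(C)$ is itself affine. Concretely, one shows that $T$ is already generated over $\k$ by the coefficients of $p_w$ as $w$ ranges over a finite family of Shirshov-bounded words in $r_1, \ldots, r_s$.

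For part (c), the hypothesis $A = CR$ lets us choose $e_1, \ldots, e_n \in R$ forming a $\Fract(C)$-basis of $\Qcl(A)$. The Gram matrix $M = (\tr(e_i e_j))$ has entries in $T$ (since $e_i e_j \in R$), and its determinant $d := \det M$ lies in $T \setminus \{0\}$ by non-degeneracy of the regular trace form on the central simple $\Fract(C)$-algebra $\Qcl(A)$. The classical dual-basis identity yields, for every $x \in TR$,
\[
d \cdot x \;=\; \sum_i \Bigl(\sum_j \mathrm{cof}_{ij}(M)\, \tr(x e_j)\Bigr) e_i,
\]
in which $\tr(x e_j) \in T$ because any element of $TR$ has trace in $T$. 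Hence $d \cdot TR \subseteq \sum_i T e_i \subseteq TR$.

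The main obstacle is to upgrade this to an inclusion inside $R$, that is, to produce some nonzero $t \in T$ with $t \cdot TR \subseteq R$. Here one uses a Formanek--Razmyslov central polynomial for the prime PI algebra $A$: such a polynomial has nonvanishing evaluations on $R$ (since $A = CR$), producing a nonzero $q \in T \cap R$, and Razmyslov's trace identity upgrades this to $q T \subseteq R$. Setting $t := q d$, we conclude $t \cdot TR \subseteq q \sum_i T e_i \subseteq \sum_i R\, e_i \subseteq R$. The delicate step is the existence of a single element $q$ absorbing all of $T$ into $R$, which genuinely uses PI-theoretic central-polynomial machinery.
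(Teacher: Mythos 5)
Parts (a) and (b) first. Your (a) is essentially the paper's argument: Cayley--Hamilton makes a finite generating set of $R$ integral over $T$, and a Shirshov-type finiteness theorem (the paper cites \cite[13.8.8]{jMcCjR87}) gives that $TR$ is a finite $T$-module. For (b) you take a much heavier route than necessary: the paper simply applies the Artin--Tate lemma to $T \subseteq TR$, which is immediate from (a) because $T$ is central in $TR$ and $TR$ is $\k$-affine. Invoking Procesi--Razmyslov (or, over an arbitrary field, Donkin) on finite generation of trace rings is a genuinely deeper theorem, and the reduction from generic matrices to an arbitrary commutative coefficient ring $C$ and to the regular representation $\rho$ would need to be spelled out; as written, (b) is an appeal to authority where a one-line deduction from (a) suffices.

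Part (c) has a genuine gap. You set $d = \det\bigl(\tr(e_ie_j)\bigr)$ and claim $d \neq 0$ ``by non-degeneracy of the regular trace form'' on the central simple algebra $\Qcl(A)$. But $\tr$ here is the trace of the regular representation of $A$ as a free $C$-module of rank $n$, which on a central simple algebra of degree $m=\sqrt{n}$ equals $m$ times the reduced trace; if $\ch\k$ divides $m$ this bilinear form is identically zero, so $d=0$ and your final element $t = qd$ vanishes. Since $\k$ is an arbitrary field throughout the paper, this step fails (it is precisely to avoid this that the paper works with all characteristic coefficients rather than the trace alone). Moreover, the dual-basis detour is superfluous: the element you actually need is the nonzero $q \in T\cap R$ with $qT \subseteq R$, because then $q\,TR = (qT)R \subseteq R$ directly, $TR$ being by definition the product $T\cdot R$. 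But that key step---that a central-polynomial value $q$ on $R$ lies in $T$ and absorbs $T$ into $R$---is exactly where the real content sits, and you leave it as a black box (``Razmyslov's trace identity upgrades this''). The paper's proof of (c) consists of precisely this point: it cites the theorem that the additive group generated by the evaluations $g_n(R)$ of the polynomial $g_n$ is a nonzero ideal of $T$ contained in $R$ \cite[13.9.6]{jMcCjR87}, whence $t\,TR \subseteq g_n(R)\,T\,R \subseteq R$ for any $0 \neq t \in g_n(R)$. So your (c) combines a step that is false in positive characteristic with an unproved assertion that carries the entire weight of the argument.
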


\begin{proof}
By the Cayley-Hamilton Theorem, $p_r(r) = 0$ for all $r \in R$. So all elements of $R$
are integral over $T$. 
Since $TR$ is $T$-affine, generated by any finite generating set of $R$ as $\k$-algebra, 
it follows that $TR$ is  finite over $T$  \cite[13.8.8]{jMcCjR87}.
This proves (a). Part (b) is now a consequence of the Artin-Tate Lemma;
see \cite[13.9.11]{jMcCjR87}.

For (c), note that $R$ is a prime PI algebra having the same PI degree as $A$, say
$n$ \cite[13.6.7]{jMcCjR87}.
Let  $g_n = g_n(x_1, x_2, \dots)$ be the multilinear polynomial in non-commuting variables $x_i$
as in \cite[13.5.11]{jMcCjR87}. The additive subgroup that is generated by the subset $g_n(R) = 
\{ g_n(r_1, r_2, \dots) \mid r_i \in R \}\subseteq R$ is a nonzero ideal of $T$ \cite[13.9.6]{jMcCjR87}.
Thus, $0 \neq g_n(R)TR \subseteq g_n(R)R \subseteq R$ and (d) holds for any 
$0 \neq t \in g_n(R)$.
\end{proof}

For a general embedding $R \into A$, the connection between $R$ and 
its characteristic closure need not be as
tight as in (c) above as the following simple example shows.

\begin{example}
\label{EX:big} 
Let $R = \k[x]$ and $C = \k[x_1,\dots, x_m]$ be the polynomial algebras and put
$A = C^{m}$. With $C$ embedded diagonally,
$A$ is free over $C$ with basis $(\d_{ij})_{j=1}^m$ $(i = 1,\dots,m)$.
Consider the embedding 
$R \into A$, $r(x) \mapsto r:= (r(x_1),\dots,r(x_m))$. 
Under the regular representation $\rho \colon A \into
\End_C(A) \cong \Mat_m(C)$, we have $\rho r = \diag(r(x_1), \dots, r(x_m))$. 
The characteristic polynomial
$p_r(t)$ is invariant under the action $\Sy_m \acts C$ by permuting the
variables; so its coefficients belong to the algebra $\k[e_1,\dots,e_m]$,
where $e_j$ denotes the $j\Th$ elementary symmetric polynomial. For $r(x) = x$ in particular,
the coefficients are $e_1,\dots, e_m$ up to $\pm$. Thus, $T = \k[e_1,\dots,e_m]$ and so
$\GKdim TR = \GKdim T = m$ while $\GKdim R = 1$. 
\end{example}


\section{Proof of Proposition~\ref{P:Coefficient}}
\label{S:Coeff}


Let $R$ be an affine representable $\k$-algebra and fix an embedding
$R \into \Mat_d(C)$
for some commutative coefficient $\k$-algebra $C$. We may 
choose $C$ to be affine and so noetherian. 
A straighforward noetherian induction shows that there are
finitely many ideals $I_j$ of $C$ such that $\bigcap I_j = 0$ and any two nonzero ideals of 
each $C_j = C/I_j$ have nonzero intersection. It will suffice to produce embeddings
$C_j \into \Mat_{c_j}(P_j)$ with $P_j$ a polynomial algebra over $\k$. Choosing
a large enough polynomial algebra $P$ so as to contain a 
copy of each $P_j$ we then obtain the embedding
\[
C \into \prod C_j \into \prod \Mat_{c_j}(P_j) \into \Mat_c(P),
\]
where  $c = \sum c_j$ and the last map comes from $\Mat_{c_j}(P_j) \into \Mat_{c_j}(P)$ and
lining all $\Mat_{c_j}(P)$ up as blocks along the diagonal of $\Mat_c(P)$.
The original embedding $R \into \Mat_d(C)$ now yields $R \into \Mat_{cd}(P)$ as desired. 

It remains to treat the case where $R$ is affine commutative and any two nonzero ideals of 
$R$ have nonzero intersection. Then any non-nilpotent $r \in R$ is in fact regular: 
$\ann_R(r^l) = \ann_R(r^{l+1}) = \dots$ 
for some $l$ and so $\ann_R(r^l) \cap r^lR = 0$ giving $\ann_R(r^l) = 0$. 
By Noether normalization, $R$ is finite over some polynomial subalgebra 
$P' \subseteq R$. All $0 \neq f \in P'$ are regular in $R$ by the foregoing; so $R \into
R_f = R[f^{-1}]$. Moreover, by generic freeness \cite[Theorem 14.4]{dE95}, we may choose $f$ so that
$R_f$ is free (of finite rank) over $P'_f = P'[f^{-1}]$. The
algebra $P'_f$ is affine Cohen-Macaulay \cite[Theorems 2.1.3(b) and 2.1.9]{wBjH98}.
Therefore, $P'_f$ is a finite-rank free module over some polynomial subalgebra $P\subseteq P'_f$
\cite[Theorem 8.4.2]{mL05} and so
$R_f$ is free of finite rank over $P$ as well.
The regular representation embeds $R_f$ into a matrix algebra over $P$ 
and hence $R$ embeds so as well. This completes the proof of Proposition~\ref{P:Coefficient}.
\qed

\bigskip

The foregoing also shows that, starting with any affine coefficient algebra $C$, the polynomial algebra
$P=\k[x_1,\dots,x_t]$ with $t = \GKdim C$ will work; so $P \equiv C$.


\section{Proof of Theorem~\ref{T:Markov}}
\label{S:Markov}


Recall that every affine representable $\k$-algebra has a coefficient algebra that is a field,
which may be chosen algebraically closed.
Fix an embedding $R \into \Mat_d(K)$ with $K$ an algebraically closed
$\k$-field. The desired commutative algebra $D \equiv R$ will be constructed
as a $\k$-subalgebra of $K$. To this end, identify $R$ with its image in $\Mat_d(K)$
and $K$ with the center of $\Mat_d(K)$ and let $A$ denote
the $\k$-subalgebra of $\Mat_d(K)$ that is generated by $R$ and $K$. So $A$ is 
a finite-dimensional $K$-algebra and
\[
R \subseteq A = KR.
\]
In several steps, we will replace $R$ by growth-equivalent but successively better-conditioned 
affine $\k$-subalgebras $R_i \subseteq A$ before arriving at $D$.
Note that, for any $\k$-algebras $S$, $S'$ with $R \subseteq S \subseteq S' \subseteq A$, 
the extension $S \subseteq A$ is centralizing and so $S \subseteq S'$ is subcentralizing.
We denote the Jacobson radical of $A$ by $\rad A$ and its nilpotence degree by $d$.
\medskip

\textbf{Step 1.} \emph{$R_1 = \bar R \oplus N$, where $N$ is an ideal with $KN = \rad A$
and $\bar R$ is a $\k$-subalgebra that is isomorphic to a finite direct product of 
$\k$-subalgebras $R_e$ with all $KR_e$ simple.}
\smallskip

By the Wedderburn Principal Theorem (e.g., \cite[11.6]{rP82}),
$A = \bar A \oplus \rad A$
for some split semisimple $K$-subalgebra of $\bar A \subseteq A$.
Write each $a \in A$ as 
$a = \bar a + a_{\rad{}}$ according to this decomposition.
Fix a finite set of algebra generators $\sR \subseteq R$ and put
$\bar{\sR} = \{ \bar r \mid r \in \sR\}$ and
$\sN = \{ r_{\rad{}} \mid r \in \sR \}$. Consider the subalgebra
$R':= \k[\bar{\sR},\sN] = R[ \sN] \subseteq A$.
Since $\sN  \subseteq \rad A$, Lemma~\ref{L:Sub}(b) gives $R' \equiv R$. 
Let $\sE$ denote the set of central primitive idempotents of $\bar A$ and
put 
\[
R_1:= R'[\sE] = R[\sN,\sE].
\]
Lemma~\ref{L:Sub}(c), with $S = R_*$\,, $\sS = \bar{\sR}$ and $\sX = \sE$, 
gives $R_1 \equiv R'$\,. Thus, $R_1 \equiv R$.
Further, $R_1 = \bar R \oplus N$, where $N$ 
denotes the ideal that is generated by $\sN$ and
$\bar R = \k[\bar{\sR}, \sE] = \bigoplus_{e \in \sE} R_e$ 
with $R_e = e \k[\bar{\sR}]$; so $\bar R$
is isomorphic to the direct product of the algebras $R_e$.
Since $R \subseteq R_1 \subseteq A = KR$, it follows that 
$KR_1 = A$. So $KN = \rad A$ and $K \bar R = \bar A$.
Therefore, the various $K R_e$ are the simple components $A_e = e\bar A$ of $\bar A$.

\medskip

\textbf{Step 2.}
\emph{$R_2 = \bar R \oplus N$ as for $R_1$; in addition, $\bar R$ is virtually commutative.}
\smallskip

Continuing with the notation of Step 1, we
write $K_e = e K$ $(e \in \sE)$, the center of the simple component $A_e = 
e\bar A = KR_e$ of $\bar A$. 
Let $C_e$ denote the characteristic closure of $R_e$
in $A_e$\,; so $C_e = T_eR_e$ with $T_e:= T_{A_e/K_e}(R_e) \subseteq K_e$ 
as in \S\ref{SS:Closure}. Consider the following $\k$-subalgebras of $\bar A$:
\[
\begin{aligned}
T:= \bigoplus_{e \in \sE} T_e
\qquad\text{and}\qquad
R_*:= \bigoplus_{e \in \sE} C_e = \bar RT.
\end{aligned}
\]
Thus, $T \subseteq \cen\bar A$ and $KR_* = \bar A$. Moreover, $T$ is $\k$-affine 
and $R_*$ is finite over $T$, because this holds for all $T_e \subseteq C_e$
by Proposition~\ref{P:Closure}. So $R_*$ is virtually commutative.
Put 
\[
R_2:= R_*[N] = R_1[T]. 
\]
Since $R_1$ and $T$ are affine, $R_2$ is so
as well. Further, $R_2 = R_* \oplus M$, where $M$ denotes the ideal
of $R_2$ that is generated by $N$; so $KM = \rad A$. By
Proposition~\ref{P:Closure}(c), $t_e T_e \subseteq R_e$ for some $0 \neq t_e \in T_e$. 
Writing $t_e = ek_e$ with $0 \neq k_e \in K$,
we have $k_eT_e = k_eeT_e = t_eT_e \subseteq R_e \subseteq R$.
Applying  Lemma~\ref{L:Sub}(a) with $\sX = T_e$\,, $c = k_e$ and $d = t_e$
for the various $e \in \sE$, 
we obtain $R_1 \equiv R_1[T_e] \equiv R_1[T_e,T_{e'}] \equiv \dots \equiv R_2$.
This completes Step 2. We will continue below with the notation
employed in the statement of Step 1: $\bar R = R_*$\,, $N = M$ and $R_e = C_e$\,.

\medskip

\textbf{Step 3.}
\emph{$R_3 = \bar R \oplus N$ as for $R_2$; in addition, $\bar R$ is commutative.}
\smallskip

Consider the algebra $R_2 = \bar R \oplus N$ in Step 2, with
$\bar R = \bigoplus_{e \in \sE} R_e$ being
finite over $Z:= \cen\bar R = \bigoplus_{e \in \sE} \cen(R_e)$.
Here $\cen(R_e) \subseteq \cen(A_e) = Ke$; so $\cen(R_e) = Z_e e$ 
with $Z_e \subseteq K$. By the Artin-Tate Lemma, $Z$ is affine.
Choose a finite set of algebra generators $\sZ \subseteq Z$ and
a finite set $\sG \subseteq \bar R$ of $Z$-module generators. 
Then $R_2 = \k[\sZ,\sG, \sN]$ for some finite set $\sN \subseteq N$.
Put 
\[
R_3:= Z \oplus N \subseteq R_2\,.
\]
Then $\sG$
generates $R_2$ as (left and right) module over the subalgebra $R_3$. Hence,
$R_3 \equiv R_2$ by Lemma~\ref{L:Relation1}(b). Furthermore, 
$R_3 = \k[\sZ, \sN\sG, \sG\sN]$ is affine.

\medskip

\textbf{Conclusion.}
\emph{The algebra $D$.}
\smallskip

Let $R_3 = Z \oplus N$ be as in Step 3 and choose a finite generating set 
$\sR = \sZ \cup \sN$ for $R_3$
with $\sN \subseteq N$ and $\sZ = \bigcup_{e \in \sE} \sZ_e e\subseteq Z$, 
where each $\sZ_e$ is a set of algebra generators of $Z_e$ with $0, 1 \in \sZ_e$\,. 
The subspace $\sR^{(n)}$ is generated by monomials of the form
\[
\tau = z_0 n_{1} z_1 n_{2} \dots z_{f-1} n_{f} z_f\,,
\]
where $n_j \in \sN$, each $z_k$ is a product with $\ell_k$ factors from $\sZ$,
and $0 \le \sum_k \ell_k \le n-f \le n$. If $\tau \neq 0$, then $f < d$, the
nilpotence degree of $\rad A$, 
and no $z_k$ involves factors from $\sZ_{e}e$ for different $e \in \sE$, 
because these idempotents are orthogonal.
So either $z_k = 1$ or
$z_k = z'_k e_{k}$ with $e_k \in \sE$ and
$z'_k$ a product of $\ell_k$ factors from $\sZ_{e_k}$\,. 
Pulling these $K$-factors out to the left, we can write $\tau = \tau' \mu$ with 
$\tau' = \prod_{z_k \neq 1} z'_k \in K$ and $\mu$ a product
with factors from $\sN \cup \sE$. By the foregoing, there a finitely many such products: $0$
and products with fewer than
$d$ factors from $\sN$ and no adjacent $\sE$-factors. Let $\sM^*$ denote the (finite) set
of words $\mu^*$ in the alphabet $\sN \cup \sE$ such that $\mu^*$ has nonzero image
$\mu \in R_3$ and is reduced using the rule $e^2 = e$ for $e \in \sE$,
and let $\sZ_{\mu^*}$ be the union of all $\sZ_{e}$ with $e$ occurring in $\mu^*$.
Then $\tau' \in \sZ_{\mu^*}^{(n-f)} \subseteq \sZ_{\mu^*}^{(n)}$ and so
\begin{equation}
\label{E:Markov1}
\sR^{(n)} \subseteq \sum_{\mu^* \in \sM^*} \sZ_{\mu^*}^{(n)} \mu\,.
\end{equation}
On the other hand, $\mu \in \sR^{(2d)}$, since each word $\mu^*$ has length less than $2d$
and $\sN \cup \sE \subseteq \sR$. 
Furthermore, if $e \in \sE$ occurs in $\mu^*$, say $\mu^* = \nu^*e\pi^*$ 
with subwords $\nu^*$ and $\pi^*$, then
$\sZ_e^{(l)}\mu = \nu(\sZ_e e)^{(l)}\pi \subseteq \sR^{(l+2d)}$. It follows that, for all $\mu^*$,
\begin{equation}
\label{E:Markov2}
\sZ_{\mu^*}^{(n)} \mu \subseteq \sR^{(n+2d)} .
\end{equation}
Put $m =|\sM^*|$ and $D^* = \prod_{\mu^*} D_{\mu^*}
= \k[\sD] \subseteq K^{m}$, where $D_{\mu^*} = \k[\sZ_{\mu^*}]$
and $\sD = \prod_{\mu^*} \sZ_{\mu^*}$.
Then $\sD^{(n)} = \prod_{\mu^*} \sZ_{\mu^*}^{(n)}$ and so
$\dim_\k \sR^{(n)} \le \dim_\k \sD^{(n)}  \le m \dim_\k \sR^{(n+2d)}$ 
by \eqref{E:Markov1} and \eqref{E:Markov2}.
Therefore, $R_3  \equiv D^*$. Finally, $\GKdim D^* = \max_{\mu^*} \GKdim  D_{\mu^*}$
\cite[3.1]{wBhK76}
and hence $D^* \equiv D_{\mu^*}$ for some $\mu^*$; see \S\ref{SS:Relation}.
So we may take $D = D_{\mu^*}$ to finish the proof of Theorem~\ref{T:Markov}. 
\qed

\bigskip

I don't know if the algebra $D$ in Theorem~\ref{T:Markov} can be chosen to also be a
coefficient algebra for $R$. An algebra closely related to the specific algebra $D$
constructed in the proof above is indeed a 
coefficient algebra. In detail, continuing with the above notation, recall that 
$R \subseteq R_2 = \sG R_3$ for some finite set $\sG$.
Let $C \subseteq K$ denote the $\k$-algebra that is generated by all $\sZ_e$ with $e \in \sE$ and
let $\sM \subseteq R_3$ be the (finite) multiplicative monoid that is
generated by $\sN \cup \sE$.
It follows from \eqref{E:Markov1} that $R_3 \subseteq \sM C$ and so $R_3\sM \subseteq \sM C$.
Consider the bimodule 
\[
M = R_2 \sM C = \sG R_3 \sM C = \sG\sM C \subseteq A.
\]
Observe that $M$ is finitely generated and torsionfree as $C$-module and faithful as
$R_2$-module, because $R_2 \subseteq M$. Letting $F \subseteq K$ denote the
field of fractions of $C$, we have $F \equiv C$ by Lemma~\ref{L:Sub}(a) and
$M \subseteq M \otimes_C F \cong F^m$ for some $m$. Therefore,
\[
R \subseteq R_2 \into \End_C(M) \into \End_F(M \otimes_C F) \cong \Mat_m(F).
\]
Thus, $F$ is a coefficient algebra for $R$ and some affine subalgebra
of $F$ will work as well.


\bibliographystyle{amsplain}
\def\cprime{$'$}
\providecommand{\bysame}{\leavevmode\hbox to3em{\hrulefill}\thinspace}
\providecommand{\MR}{\relax\ifhmode\unskip\space\fi MR }
\providecommand{\MRhref}[2]{%
  \href{http://www.ams.org/mathscinet-getitem?mr=#1}{#2}
}
\providecommand{\href}[2]{#2}


\end{document}